\providecommand\@dotsep{5}\def\listtodoname{List of Todos}\def\listoftodos{\hypersetup{linkcolor=black}\@starttoc{tdo}\listtodoname\hypersetup{linkcolor=blue}}\makeatother
\newtheorem{theorem}{Theorem}[section]
\newtheorem{lemma}[theorem]{Lemma}
\theoremstyle{remark}
\newtheorem{remark}{Remark}
\numberwithin{equation}{section}
\def\C{\mathbb C}
\def\R{\mathbb R}
\def\N{\mathbb N}
\def\i{\textrm{i}}
\renewcommand{\leq}{\leqslant}
\renewcommand{\geq}{\geqslant}
\def\p{\partial}
\newcommand*\xbar[1]{%
   \hbox{%
     \vbox{%
       \hrule height 0.5pt 
       \kern0.5ex
       \hbox{%
         \ensuremath{#1}%
       }%
     }%
   }%
} 
\title[]{Reconstruction of 1-D evolution equations and their initial data from one passive measurement}
\author[A. Feizmohammadi]{Ali Feizmohammadi}
\address{Department of Mathematical and Computational Sciences,
	University of Toronto Mississauga, 3359 Mississauga Road
	Deerfield Hall, Mississauga, ON L5L 1C6.}
\email{ali.feizmohammadi@utoronto.ca}
\keywords{Inverse problems, wave equation, heat equation, inverse spectral problems, Payley-Wiener interpolation, photo-acoustic tomography}
\begin{document}


\maketitle

\begin{abstract}
We study formally determined inverse problems with passive measurements for one dimensional evolution equations where the goal is to simultaneously determine both the initial data as well as the variable coefficients in such an equation from the measurement of its solution at a fixed spatial point for a certain amount of time. This can be considered as a one-dimensional model of widely open inverse problems in photo-acoustic and thermo-acoustic tomography. We provide global uniqueness results for wave and heat equations stated on bounded or unbounded spatial intervals. Contrary to all previous related results on the subject, we do not impose any genericity assumptions on the coefficients or initial data. Our proofs are based on creating suitable links to the well understood spectral theory for 1D Schr\"odinger operators. In particular, in the more challenging case of a bounded spatial domain, our proof for the inverse problem partly relies on the following two ingredients, namely (i) a Paley-Wiener type theorem for Schr\"odinger operators due to Remling  \cite{Remling2002SchrdingerOA} and a theorem of Levinson \cite{Levinson1940} on distribution of zeros of entire functions of regular growth that together provide a quantifiable link between support of a compactly supported function and the upper density of its vanishing Schr\"odinger spectral modes and (ii) a result of Gesztesy and Simon \cite{Gesztesy1999InverseSA} on partial data inverse spectral problems for reconstructing an unknown potential in a 1D Schr\"odinger operator from the knowledge of only a fraction of its spectrum.  
\end{abstract} 


\section{Introduction}

\subsection{Motivation} The theory of inverse problems for PDEs seeks to recover hidden parameters of a system from indirect measurements. In many settings, the observer can actively probe the system and measure its response—this leads to inverse problems with active measurements. As an example, propagation of acoustic waves through an isotropic medium is governed by the equation 
$$\Box_c u:= \p^2_t u - c^2(x)\Delta u =0 \quad \text{on $\R_+\times \R^n$},$$
where $\R_+=(0,\infty)$. An experimental setup would be to probe an unknown medium $\Omega$ by sending artificial wave packets and subsequently measuring the reflected waves that arrive back to the surface of the medium.  This would be equivalent to the knowledge of the following dataset 
$$ \mathcal C(c) = \left\{(u,\p_\nu u)|_{\R_+\times \p\Omega}\,:\, u\in C^\infty(\R_+\times \R^n),\quad \Box_c u=0 \quad \text{on $\R_+\times \R^n$}\right\},$$
where $\nu$ is the outward unit normal vector field on $\p \Omega$. The inverse problem would be to uniquely determine the wave speed $c(x)$ inside the unknown medium, namely $x\in \Omega$, from the dataset $\mathcal C(c)$. This problem was solved by Belishev in \cite{Bel87} through the introduction of the powerful boundary control method, see e.g. \cite{Belishev2007RecentPI,Belishev2008BoundaryCA,Nur23} for further results and expositions on the boundary control method. In fact, it was proved in \cite{Katchalov2002EquivalenceOT} that analogous inverse problems for various type of evolution equations are all equivalent to each other.

In many real-world applications, a more physically realistic model is that of passive measurements, where the observer has no control over the excitation and must infer internal properties solely from the system’s spontaneous response. In these models, the observer can only measure the response of the system to an unknown cause and is nevertheless tasked with the question of identifying the properties of the medium, as well as the initial causal factor.  An example of this type of inverse problem would be to obtain information about the interior structure of an unknown object, such as earth, from passive measurements of waves on its boundary, such as the seismic data from an active earthquake happening at an unknown location inside the earth. As another well known example in photo-acoustic and thermo-acoustic tomography, one is allowed to probe a medium with an electromagnetic wave which in turn produces heat and an elastic expansion thus generating a sound wave that the observer can subsequently measure at the boundary of the medium. In photo-acoustic tomography (PAT) this is done with a rapidly pulsating laser
beam; in thermoacoustic tomography (TAT) the medium is probed with an electromagnetic
wave of a lower frequency; see e.g. \cite{Kruger1999ThermoacousticCT}. In many practical situations the sound speed inside the medium is unknown
and in order to determine this sound speed additional measurements have been considered in
\cite{Jin2006ThermoacousticTW}. To describe the latter inverse problem in more detail, consider the acoustic wave equation,
\begin{equation}\label{pat}
	\begin{aligned}
		\begin{cases}
			\p^2_t u-c^2(x)\,\Delta u=0\,\quad &\text{$(t,x)\in \R_+\times \R^n$}
			\\
			u(0,x)=f(x) \,\quad &\text{$x\in \R^n$}\\
			\p_t u(0,x)=0 \,\quad &\text{$x\in \R^n$}
		\end{cases}
	\end{aligned}
\end{equation}
where $f$ models the initial pressure (generally assumed to be not identically zero) and $c$ is the speed of sound \cite{Diebold1991PhotoacousticMR}. In practice both of these are a priori unknown and the task is to determine them both uniquely from the passive boundary measurements $u(t,x)$ with $t\in \R_+$ and $x\in \p\Omega$. 

Due to the very limited amount of data, the above inverse problem is notoriously difficult and most of the known results impose rather stringent assumptions. For example, certain results assume that the wave speed is known and it is only the initial pressure that is sought after, see e.g. \cite{Agranovsky2007OnRF,Belishev2019OnAI,Oksanen2013PhotoacousticAT,FaouziTriki2025,Stefanov2009ThermoacousticTW} for related results. On the other hand, there are some results where the authors use one passive measurement from a specially designed (and often highly singular) a priori known initial data $f$ to determine the unknown coefficient in the PDE, see e.g. \cite{BukKli81,Cheng2002IdentificationOC,Cheng2009UniquenessIA,Feizmohammadi2020GlobalRO,Helin2013InversePF,Helin2016CorrelationBP,Kian2020TheUO,Kian2022SimultaneousDO,Rakesh2001AOI,Stefanov2011RecoveryOA}. For the general problem where both the initial data and the coefficient of the PDE are unknown, the available results either assume that the wave speed and initial data both satisfy some explicit equations \cite{Knox_2020,Liu_2015}, that they obey certain monotonicity conditions \cite{Kian23}, or finally that they are a priori known to belong to an abstract generic class, see e.g. \cite{Avdonin2011ReconstructingTP,Finch2013TransmissionEA,Jing2022,Pierce1979UniqueIO,Murayama1981TheGT,Suzuki1983UniquenessAN,Suzuki1986InversePF}.

In this paper, we are concerned with one dimensional models of inverse problems with passive measurements such as the photo-acoustic and thermo-acoustic tomography inverse problems discussed above. In this 1D setup, and by introducing a new perspective based on deep known results in spectral theory, we provide comprehensive results for the simultaneous recovery of coefficients and the initial data without resorting to any genericity type assumptions that have been present in all previously related 1D inverse problems with passive measurements such as \cite{Avdonin2011ReconstructingTP,Jin2006ThermoacousticTW,Jing2022,Suzuki1986InversePF,Pierce1979UniqueIO}. We hope the analysis will shed some light on the related multi-dimensional inverse problems. 

\subsection{Inverse problem for waves in an unbounded domain}

We begin with a one-dimensional model of inverse problems in photo-acoustic tomography. Let $c\in C^2([0,\infty))$ be a positive function satisfying
\begin{equation}
	\label{c_cond}
	\textrm{supp}\, (c_0-c) \subset (0,1),
\end{equation}
for some positive constant $c_0>0$. We consider the one dimensional wave equation on half-line,
\begin{equation}\label{pf0}
	\begin{aligned}
		\begin{cases}
			\p^2_t u-c^2(x)\,\p^2_x u=0\,\quad &\text{$(t,x)\in \R_+^2:=(0,\infty)^2$}
			\\
			u(0,x)=f(x) \,\quad &\text{$x\in \R_+$}\\
			\p_t u(0,x)=0 \,\quad &\text{$x\in \R_+$}\\
			u(t,0)=0  \,\quad &\text{$ t\in \R_+$.}
		\end{cases}
	\end{aligned}
\end{equation}
For the initial data, we make the assumption that $f\in H^2(\R_+)$ and that
\begin{equation}\label{F_supp}
	\textrm{supp}\,f\, \subset (0,1).
	\end{equation} 
Given any $f$ as above, it is well known that equation \eqref{pf0} admits a unique solution $u$ in the energy space
\begin{equation}
	\label{natural_space}
	C^2(\R_+;L^2(\R_+))\cap C^1(\R_+;H^1(\R_+))\cap C^0(\R_+;H^2(\R_+)).
\end{equation}
Moreover, $u(\cdot,x) \in H^2(\R_+)$ and $\p_xu(\cdot,x)\in H^1(\R_+)$ for each fixed $x\in \R^+$, see e.g. \cite[Proposition 1.1]{Arnold2022OnTE}. We consider the passive measurement
\begin{equation}
	\label{passive}
	(f,c)\mapsto u(t,1) \quad t\in \R_+,
\end{equation}
where $u$ is the unique solution to \eqref{pf0} subject to the initial data $f$ and the wave speed $c$. We are interested in the following inverse problem; is it possible to simultaneously and uniquely determine the wave speed $c$, as well as initial data $f$, given the passive boundary measurement \eqref{passive} of the unique solution to \eqref{pf0}? We prove the following theorem that settles the inverse problem above under the natural assumption that the initial data satisfies \eqref{F_supp} and that it is not identically zero.
\begin{theorem}
	\label{thm_1}
	For $j=1,2,$ let $f_j\in H^2(\R_+)$ be not identical to zero, let $c_j\in C^{2}(\R_+)$ be a positive function and assume that \eqref{c_cond}--\eqref{F_supp} are satisfied with $c=c_j$, $f=f_j$ and $u=u_j$. Then,
	$$u_1(t,1)=  u_2(t,1) \quad \forall\,t\in \R_+ \implies  (f_1,c_1)\equiv(f_2,c_2). $$
\end{theorem}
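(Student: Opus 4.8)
The plan is to reduce \eqref{pf0} to a Schr\"odinger-type wave equation and then read the two unknowns off the spectral data of the associated 1D Schr\"odinger operator. First I would introduce the travel-time coordinate $y=\int_0^x \frac{ds}{c(s)}$ and set $w=c^{1/2}u$. A direct computation turns \eqref{pf0} into
\[
  \p_t^2 w-\p_y^2 w+q\,w=0\ \text{on }\R_+^2,\qquad w(t,0)=0,\qquad w(0,\cdot)=g,\ \p_t w(0,\cdot)=0,
\]
with $g=c^{1/2}f$ and $q=\tfrac34 (c'/c)^2-\tfrac12\,c''/c$ (here $'=\tfrac{d}{dy}$, with $c$ read in the $y$ variable). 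By \eqref{c_cond}--\eqref{F_supp} both $q$ and $g$ are supported in $(0,y_1)$, where $y_1:=\int_0^1\frac{ds}{c}$ is the travel time to $x=1$, and since $c\equiv c_0$ for $y\ge y_1$ the measurement \eqref{passive} becomes $w(t,y_1)=c_0^{-1/2}u(t,1)=:M(t)$. Two features guide the rest: $w$ is even in $t$ (as $\p_t w(0,\cdot)=0$), and for $y>y_1$ the equation is free with vanishing data, so the solution is purely outgoing there and $\p_y w(t,y_1)=-\p_t w(t,y_1)=-M'(t)$; thus the measurement supplies the full Cauchy data of $w$ on $\{y=y_1\}$.

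Next I would Laplace transform in $t$. With $W(y,s)=\int_0^\infty e^{-st}w(t,y)\,dt$ the equation becomes the inhomogeneous Schr\"odinger equation $-W''+(q+s^2)W=s\,g$ with $W(0,s)=0$ and $W(\cdot,s)$ decaying at $+\infty$. Solving via the Green's function built from the Dirichlet solution $\varphi(\cdot,s)$ (with $\varphi(0,s)=0,\ \varphi'(0,s)=1$) and the decaying Jost solution $\psi(\cdot,s)$ (normalized to $e^{-sy}$ for $y\ge y_1$), whose Wronskian equals $-\psi(0,s)$, and evaluating at $y=y_1$ gives the key identity
\[
  \psi(0,s)\,\widehat M(s)=-s\,e^{-sy_1}\,\widetilde g(s),\qquad \widetilde g(s):=\int_0^{y_1}\varphi(y,s)\,g(y)\,dy .
\]
Thus the known transform of the data equals the Jost function of $q$, times explicit factors, times the transform of $g$.

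The heart of the proof is to split this single scalar relation into the two unknown functions. I would exploit the contrasting analytic structures of the two factors: by Paley--Wiener and the transmutation representation of $\varphi$, the factor $\widetilde g$ is entire, even, of exponential type strictly less than $y_1$, whereas $\psi(0,\cdot)$ is the Jost function of a compactly supported potential---entire, with $\psi(0,s)\to1$ as $\operatorname{Re}s\to+\infty$ and zeros located at the resonances of $-\p_y^2+q$. Hence $\widehat M$ continues meromorphically, exposing the resonance set, which by the inverse-spectral/resonance theory for half-line Schr\"odinger operators (Borg--Marchenko, Marchenko) determines both $y_1$ and $q$; the identity then pins down $\widetilde g$, and hence $g$. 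I expect this separation---in particular extracting $y_1$ independently of $\supp g$ and controlling any zeros of $\psi(0,\cdot)$ in $\operatorname{Re}s>0$ (bound states) that could be cancelled by zeros of $\widetilde g$---to be the principal obstacle, and this is precisely the point at which the one-dimensional Schr\"odinger inverse theory must be invoked.

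Finally I would recover the physical coefficients. The quantity $v:=c'/(2c)$ solves the Riccati equation $v'=v^2-q$ with $v(y_1)=0$ and is independent of the scale $c_0$; integrating it backward recovers the shape of $c$ on $(0,y_1)$, while the normalization $\int_0^{y_1}c\,dy=1$ (expressing that $y_1$ is the travel time to $x=1$) fixes $c_0$. Returning to the $x$ variable yields $c$ on $(0,1)$ and then $f=c^{-1/2}g$. Since every step is driven by the data $M(t)$, applying the construction to $u_1$ and $u_2$ forces $(f_1,c_1)\equiv(f_2,c_2)$.
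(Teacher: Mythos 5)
Your reduction steps are sound and in fact parallel the opening of the paper's own proof: the outgoing-wave argument giving $\p_y w(t,y_1)=-\p_t w(t,y_1)$ is the analogue of Lemma~\ref{lem_cauchy}, the Liouville change of variables occurs in the paper as well (there it is performed \emph{after} the spectral identification), and your transform identity $\psi(0,s)\,\widehat M(s)=-s\,e^{-sy_1}\,\widetilde g(s)$ is the analogue of \eqref{v_hat_u}. The gap is exactly where you place it, and it cannot be deferred to ``one-dimensional Schr\"odinger inverse theory'': from a single product identity with \emph{two} unknown factors you cannot extract the Jost function, because every zero of $\psi(0,\cdot)$ in the continuation region (a resonance) can in principle be cancelled by a zero of the unknown entire factor $\widetilde g$, so the meromorphic continuation of $\widehat M$ exposes only an uncontrolled subset of the resonance set. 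Borg--Marchenko requires the Weyl $m$-function $\psi'(0,s)/\psi(0,s)$, which your data do not furnish, and the inverse resonance theorems for compactly supported potentials require the \emph{full} resonance set. Since Theorem~\ref{thm_1} imposes no smallness on $\supp f$, even a density count does not save you: $\widetilde g$ is an even entire function of exponential type up to $y_1$, so its zeros can have density comparable to that of the resonances. (Your bound-state worry, at least, is vacuous: $-\p_y^2+q$ is unitarily equivalent to the nonnegative operator $-c^2\p_x^2$ with Dirichlet condition, so $\psi(0,\cdot)$ is zero-free in the open right half-plane.) The same entanglement defeats your recovery of $y_1$: comparing exponential rates in the identity only yields $y_1-\sup\supp g$, the travel time from the \emph{edge of the support} of the initial datum to the sensor, so a trade-off between a slower wave speed and an initial datum supported closer to $x=1$ is not excluded by anything in your sketch.

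The missing idea is that the paper never attempts reconstruction from one measurement; it runs a two-solution comparison that makes all cancellations harmless. Writing $\hat u_j(z,1)=a_j(z)\,v_j(z,1)$ (your $a_j$ is essentially $s\,\widetilde g_j$ up to explicit factors), equality of the two measured Cauchy data gives $a_1(z)\left(v_1,\p_x v_1\right)(z,1)=a_2(z)\left(v_2,\p_x v_2\right)(z,1)$; since a nontrivial solution of a second-order ODE cannot have vanishing Cauchy data at $x=1$, the zeros of $a_1$ and $a_2$ coincide with multiplicity, and the two-sided exponential bound \eqref{ODE_1} together with Riemann's removable-singularity theorem makes $\rho=a_1/a_2$ holomorphic and \emph{nowhere vanishing} in a strip (Lemma~\ref{lem_reduction_wave}). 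Hence no spectral information is lost: the zero sets of $v_1(\xi,1)$ and $v_2(\xi,1)$ agree, i.e.\ the full Dirichlet spectra of $-c_j^2\p_x^2$ on $(0,1)$ coincide; Weyl asymptotics then force equal travel times (settling precisely the $y_1$ ambiguity above), and after the Liouville transform the relation $\tilde v_2=\rho\,\tilde v_1$ for $y\geq L$ gives equal Dirichlet spectra on the doubled interval $[0,2L]$, where both potentials vanish on $[L,2L]$; the Hochstadt--Lieberman theorem \cite{Hochstadt1978ANIS} then yields $q_1=q_2$, after which your Riccati/ODE step matches the paper's recovery of $c$, and $f$ follows by unique continuation. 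To salvage your reconstruction framework you would need a partial-resonance uniqueness theorem robust to an unknown, possibly positive-density set of cancellations --- a substantially harder problem than the one being solved.
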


This result goes beyond prior work on inverse problems with passive measurements in 1D, where recovery of both the coefficient and initial data typically requires genericity or monotonicity assumptions.



\subsection{Inverse problem for heat flows in a bounded domain}

Let $T>0$. We consider a one dimensional convection-diffusion process modelled by the parabolic initial boundary value problem 
\begin{equation}\label{pf2}
	\begin{aligned}
		\begin{cases}
			\p_t u-\p^2_xu+b(x)\p_x u=0\,\quad &\text{on $(0,T)\times (0,1)$},\\
			u(t,0)=0 \,\quad &\text{on $(0,T)$,}\\
			u(t,1)=0 \,\quad &\text{on $(0,T)$,}\\
				u(0,x)=g(x) \,\quad &\text{on $(0,1)$,}
		\end{cases}
	\end{aligned}
\end{equation}
Given each $g\in H^1_0((0,1))$, it is classical that the above initial boundary value problem admits a unique solution 
$$u\in H^1(0,T;L^2((0,1)))\cap L^2(0,T;H^2((0,1))),$$ 
see e.g. \cite[Theorem 4.3]{LionsMagenes1972}. Moreover, $\p_x u(t,1) \in H^{\frac{1}{4}}((0,T))$. For our second inverse problem, the goal is to simultaneously determine the convection function $b$ as well as the initial data $g$ from the single passive measurement
$$ (g,b) \mapsto  \p_x u(t,1) \quad t\in [T_1,T_2],$$
for some $0\leq T_1<T_2<T$.

\begin{theorem}
	\label{thm_2}
	Let $T>0$ and let $\{t_k\}_{k=1}^\infty\subset \R$ satisfy $\lim\limits_{k\to \infty}t_k=T_1$ for some $T_1\in (0,T)$. For $j=1,2,$ let $b_j\in C^{1}([0,1])$ and let $g_j \in H^1_0((0,1))$ be not identical to zero. Assume that 
	\begin{equation}\label{supp_cond}
		\textrm{supp}\, g_1 \cup 	\textrm{supp}\, g_2 \subset [0,\varepsilon] \quad \text{and} \quad \textrm{supp}\,(b_1-b_2) \subset [0,\frac{1}{2}-\frac{\varepsilon}{2}),		
		\end{equation}
	for some $\varepsilon\in (0,1)$. Denote by $u_j$ the unique solution to \eqref{pf2} with $g=g_j$ and $b=b_j$. Then,
	$$\p_xu_1(t_k,1)=  \p_xu_2(t_k,1) \quad \forall\,k\in \N \implies  (g_1,b_1)\equiv(g_2,b_2).$$
\end{theorem}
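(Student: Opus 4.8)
The plan is to reduce the convection--diffusion problem to a heat equation governed by a one-dimensional Schr\"odinger operator, and then to extract enough spectral information from the single passive measurement to invoke the inverse-spectral machinery advertised in the abstract. First I would apply the Liouville substitution $v_j = u_j/\mu_j$ with $\mu_j(x) = \exp(\tfrac12\int_0^x b_j)$. A direct computation shows that $v_j$ solves $\p_t v_j - \p_x^2 v_j + q_j v_j = 0$ on $(0,T)\times(0,1)$ with homogeneous Dirichlet data, where $q_j = \tfrac{b_j^2}{4} - \tfrac{b_j'}{2}\in C^0([0,1])$, initial datum $h_j = g_j/\mu_j$, and the measured flux transforms as $\p_x u_j(t,1) = \mu_j(1)\,\p_x v_j(t,1)$ since $v_j(t,1)=0$. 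Crucially, $\supp h_j = \supp g_j \subset [0,\epsilon]$ is preserved, and because $b_1=b_2$ on $[\tfrac12-\epsilon,1]$ we also get $q_1 = q_2$ on $[\tfrac12-\epsilon,1]$.

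Next, letting $\{\lambda_n^{(j)},\phi_n^{(j)}\}$ denote the Dirichlet eigenpairs of $H_j = -\p_x^2 + q_j$ on $(0,1)$, I expand
$$\p_x v_j(t,1) = \sum_n e^{-\lambda_n^{(j)} t}\, \pair{h_j,\phi_n^{(j)}}\,(\phi_n^{(j)})'(1).$$
Since $\lambda_n^{(j)} \sim n^2\pi^2$ while the coefficients grow at most polynomially, each series converges locally uniformly on $\{\operatorname{Re} t>0\}$ and is holomorphic there. The hypothesis gives equality of $\mu_1(1)\p_x v_1(\cdot,1)$ and $\mu_2(1)\p_x v_2(\cdot,1)$ along $t_k\to T_1\in(0,T)$, an interior accumulation point, so by the identity theorem they agree for all $t>0$. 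Uniqueness of Dirichlet series expansions (peeling off the smallest surviving exponent as $t\to\infty$) then forces the two observable spectra to coincide and the surviving coefficients to differ by a fixed positive constant. Here I use that $(\phi_n^{(j)})'(1)\neq 0$ always, since otherwise $\phi_n^{(j)}$ would have vanishing Cauchy data at $x=1$; thus a mode survives precisely when $\pair{h_j,\phi_n^{(j)}}\neq 0$, and I denote the common surviving set by $\Lambda \subset \operatorname{spec}(H_1)\cap\operatorname{spec}(H_2)$.

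The heart of the argument is to show that $\Lambda$ is large enough. Because $\supp h_j\subset[0,\epsilon]$, the map $k\mapsto \int_0^\epsilon h_j(x)\phi^{(j)}(x,k)\,dx$ extends to an entire function of exponential type $\epsilon$, so by the Paley--Wiener and zero-counting results of Kahane \cite{Kahane1957SurLF} and Remling \cite{Remling2002SchrdingerOA} the vanishing modes of $h_j$ have upper density at most $\epsilon$; hence $\Lambda$ has lower density at least $1-\epsilon$ inside each spectrum. Since $q_1=q_2$ on $[\tfrac12-\epsilon,1]$ and $1-\epsilon \geq 1-2\epsilon = 2(\tfrac12-\epsilon)$, the common eigenvalues $\Lambda$ meet the density threshold of the Gesztesy--Simon partial-data theorem \cite{Gesztesy1999InverseSA}, which yields $q_1=q_2$ on all of $(0,1)$. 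To descend from $q$ back to $b$, I subtract the two Riccati identities: $w=b_1-b_2$ satisfies the linear ODE $w' = \tfrac12(b_1+b_2)\,w$, and since $w\equiv 0$ on $[\tfrac12-\epsilon,1]$ unique solvability gives $w\equiv 0$, i.e.\ $b_1=b_2$. Consequently $\mu_1=\mu_2$, so $\mu_1(1)=\mu_2(1)$, $H_1=H_2$, and $\phi_n^{(1)}=\phi_n^{(2)}$; the coefficient relation now reads $\pair{h_1,\phi_n}=\pair{h_2,\phi_n}$ for every $n$, and completeness of $\{\phi_n\}$ forces $h_1=h_2$, hence $g_1=g_2$. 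The main obstacle I anticipate lies in this third step: making the chain ``compact support $\Rightarrow$ exponential type $\Rightarrow$ zero density'' rigorous for a genuine potential $q$ rather than the free operator, and aligning the resulting density $1-\epsilon$ with the precise density hypothesis demanded by the Gesztesy--Simon theorem.
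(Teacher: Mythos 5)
Your proposal is correct and follows essentially the same route as the paper: Liouville reduction to a Schr\"odinger form, real-analyticity plus Dirichlet-series/pole uniqueness to match the surviving spectra, Remling's transmutation result combined with the Kahane--Beurling zero-density theorem to bound the lost modes by density $\epsilon$ (versus the Gesztesy--Simon threshold $1-2\epsilon$ available since $V_1=V_2$ on $[\tfrac12-\epsilon,1]$), and a Riccati ODE argument to recover $b$. The only cosmetic differences are your normalization of the gauge factor at $x=0$ rather than $x=1$ (so you carry the constants $\mu_j(1)$, which is harmless) and your recovery of $g_1=g_2$ via coefficient matching and completeness of the eigenbasis, where the paper instead invokes unique continuation for the heat equation.
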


\begin{remark}
	\label{rmk_result}
	Our methodology for the proof of the above theorem is rather robust and can allow certain generalizations. For example, instead of the convection term $b$, one may consider a conductivity type equation 
	$$ \p_t u - \p_x \left( \sigma(x) \p_x u\right)=0 \quad (t,x)\in (0,T)\times (0,1).$$
	The result above would still follow after some modifications on the assumption \eqref{supp_cond} that would involve some a priori bounds on the conductivity term $\sigma$. As another example, one may consider different Dirichlet, Neumann or mixed spatial boundary conditions. In fact, it would even be possible to add an unknown Robin type boundary condition such as
	$$ \sin(\alpha)u(t,0) +\cos(\alpha) \p_x u(t,0)=0 \quad t\in (0,T),$$
	for some unknown $\alpha \in (-\frac{\pi}{2},\frac{\pi}{2}]$ instead of the Dirichlet condition above and subsequently aim to recover $(\alpha,b,g)$ uniquely. For simplicity of presentaion, we have avoided discussing these further results here. 
\end{remark}

\section{Proof of Theorem~\ref{thm_1}}

	We will assume throughout this section that the assumptions of Theorem~\ref{thm_1} are fulfilled.  
	
	\begin{remark}\label{rmk_u}
	It is possible to extend the solutions $u_1(t,x)$ and $u_2(t,x)$ to the full spacetime $t\in \R$ and $x\in \R$ simply by considering an even extension of the wave speeds $c_j(x)$, $j=1,2,$ to the full space $x\in \R$ and an odd extension of $u_j(t,x)$ to $\R_+\times \R$ followed by an even extension in time to the full spacetime $\R\times \R$. For the remainder of this section, we use the same notation $u_j$ for this extended solution. Furthermore, by combining known local  decay estimates in time for wave equations, see e.g. \cite{Arnold2022OnTE}, together with Sobolev embedding we deduce that 
			\begin{equation}
			\label{exp_decay}
			|u_j(t,x)| + 	|\p_x u_j(t,x)| \leq C\, e^{-\kappa |t|} \quad x\in [-1,1] \quad t\in \R \quad j=1,2,
		\end{equation} 
	for some $C,\kappa>0$ depending on $\|f_1\|_{H^2(\R)}$, $\|f_2\|_{H^2(\R)}$, $\|c_1\|_{C^2(\R)}$ and $\|c_2\|_{C^2(\R)}$.
\end{remark}

\begin{lemma}
	\label{lem_cauchy}
	We have $u_1(t,1)=u_2(t,1)$ and $\p_x u_1(t,1) = \p_x u_2(t,1)$ for all $t\in \R$.
\end{lemma}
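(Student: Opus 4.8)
The plan is to use that, by \eqref{c_cond}, each wave speed $c_j$ equals the same background constant $c_0$ on $[1,\infty)$, so that for $x\ge 1$ each (extended) solution is a purely right-moving wave. This produces an impedance-type identity tying $\p_x u_j(t,1)$ to $\p_t u_j(t,1)$, after which the measurement hypothesis finishes the job. The value part of the lemma is immediate from the construction in Remark~\ref{rmk_u}: the even-in-time extension makes each $u_j(\cdot,1)$ an even function of $t$, so the hypothesis $u_1(t,1)=u_2(t,1)$ for $t\in\R_+$ propagates to every $t\in\R$ (with $t=0$ handled by continuity in time).

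For the derivative part, I would argue as follows. On the half-space $x\ge 1$ the equation reads $\p_t^2 u_j=c_0^2\,\p_x^2 u_j$, so d'Alembert's formula gives $u_j(t,x)=\phi_j(x-c_0 t)+\psi_j(x+c_0 t)$ there, with $\phi_j$ the right-moving and $\psi_j$ the left-moving component. Because $f_j$ is supported in $(0,1)$ by \eqref{F_supp}, both $u_j(0,\cdot)$ and $\p_t u_j(0,\cdot)$ vanish on $[1,\infty)$; substituting into the representation forces $\phi_j'=\psi_j'=0$ on $[1,\infty)$ (working at the level of derivatives sidesteps the harmless additive-constant ambiguity in $\phi_j,\psi_j$, which could otherwise be removed using the decay \eqref{exp_decay}).

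Now for any $t\ge 0$ and $x\ge 1$ the argument $x+c_0 t$ is at least $1$, so the left-moving part contributes nothing to the derivatives of $u_j$; evaluating at $x=1$ gives $\p_x u_j(t,1)=\phi_j'(1-c_0 t)$ and $\p_t u_j(t,1)=-c_0\,\phi_j'(1-c_0 t)$, whence the identity $\p_x u_j(t,1)=-\frac{1}{c_0}\,\p_t u_j(t,1)$ for all $t\ge 0$. Differentiating the hypothesis $u_1(t,1)=u_2(t,1)$ in time yields $\p_t u_1(t,1)=\p_t u_2(t,1)$, and since the two media share the same $c_0$ this gives $\p_x u_1(t,1)=\p_x u_2(t,1)$ for $t\ge 0$. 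Finally $\p_x u_j(\cdot,1)$ is again even in $t$, so the identity propagates to all $t\in\R$.

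I expect the only genuinely delicate point to be the rigorous justification of the outgoing (right-moving) representation on $x\ge 1$, i.e. the absence of any incoming wave from $+\infty$. Finite speed of propagation together with the vanishing of the Cauchy data at $t=0$ beyond $x=1$ is the heart of it, while \eqref{exp_decay} is what rules out a spurious constant mode and confirms that no incoming radiation is present. It is worth emphasizing that this is precisely the step that relies on the common background speed $c_0$ at and beyond $x=1$: were the two asymptotic speeds different, the impedance identity would produce different normal derivatives and the conclusion of the lemma would fail.
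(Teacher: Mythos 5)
Your proof is correct, but it takes a genuinely different route from the paper's. The paper argues on the difference: it sets $w=u_1-u_2$, observes that by \eqref{c_cond} and \eqref{F_supp} the function $w$ solves the constant-speed wave equation $\p_t^2 w = c_0^2\,\p_x^2 w$ on $\R\times(1,\infty)$ with vanishing Cauchy data at $t=0$ and vanishing Dirichlet trace $w(t,1)=0$ (the measurement equality, extended to $t\in\R$ by evenness exactly as you do), and then invokes uniqueness for this initial boundary value problem to conclude $w\equiv 0$ on $\R\times(1,\infty)$, whence $\p_x u_1(t,1)=\p_x u_2(t,1)$ by continuity of $\p_x u_j$ up to $x=1$. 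You instead prove, for each solution separately, the outgoing impedance identity $\p_x u_j(t,1) = -c_0^{-1}\,\p_t u_j(t,1)$ for $t\ge 0$ via the d'Alembert splitting $u_j=\phi_j(x-c_0t)+\psi_j(x+c_0t)$ on the quarter-plane $\{t\ge 0,\ x\ge 1\}$, and then differentiate the measured trace in time. Both hinge on the same structural facts (constant speed and vanishing data beyond $x=1$); the paper's version is shorter and sidesteps the representation-formula technicalities (your splitting is indeed global there, since the quarter-plane is convex in characteristic coordinates, and $H^2$-in-$x$ regularity gives the needed $C^1$ traces), while yours proves strictly more: the normal derivative at the measurement point is an explicit local function of the measured time trace, which makes transparent why a single passive Dirichlet trace already encodes the Neumann data. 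One small correction: your closing suggestion that \eqref{exp_decay} is needed to rule out incoming radiation is superfluous --- your own computation shows $\psi_j'(x+c_0t)=0$ for all $x\ge 1$, $t\ge 0$ purely from the vanishing of the Cauchy data on $[1,\infty)$, and the additive-constant ambiguity already disappears because only $\phi_j'$, $\psi_j'$ enter; no decay estimate is used at any point.
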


\begin{proof}
	The first equality is a trivial consequence of Remark~\ref{rmk_u}. Next, note that the function $w(t,x)= u_1(t,x)-u_2(t,x)$ satisfies
	\begin{equation*}
		\begin{aligned}
			\begin{cases}
				\p^2_t w-c_0^2\,\p^2_x w=0\,\quad &\text{$(t,x)\in \R\times (1,\infty)$}
				\\
				w(0,x)=0 \,\quad &\text{$x\in (1,\infty)$}\\
				\p_t w(0,x)=0 \,\quad &\text{$x\in (1,\infty)$}\\
				w(t,1)=0  \,\quad &\text{$ t\in \R$.}
			\end{cases}
		\end{aligned}
	\end{equation*}
By uniqueness of solutions to the above PDE, we conclude that $w(t,x)=0$ for all $(t,x)\in \R\times (1,\infty)$.
\end{proof}

\begin{lemma}
	\label{lem_energy}
	Let $c\in C^2([0,1])$ be a positive function. Given any $z \in \C$, define $v(z,\cdot)$ to be the unique solution to the following ODE
	\begin{equation}\label{eq_system} -c^2(x)\,\p^2_x v(z,x) = z^2 v(z,x) \quad \text{on $\R$}\quad \text{and $v(z,0)=0$, $\p_x v(z,0)=1$.} \end{equation}
	There exists positive constants $C>0$ depending only on $\|c\|_{C^2([0,1])}$ such that
		\begin{equation}
		\label{ODE_energy}
		C^{-1} e^{-\lambda |z|} \leq |v(z,1)| + |\p_x v(z,1)| \leq C e^{\lambda |z|}, \quad \forall\, z \in \C.
	\end{equation}
\end{lemma}

\begin{proof}
	We will consider equation \eqref{eq_system} on the interval $[0,1]$. Let us introduce the functions $p\in C^2([0,1])$ and $w(z,\cdot)\in C^2([0,1])$ for each $z\in \C$ via 
	\begin{equation}\label{p_w_def}
	p(x) = \int_0^x \frac{1}{c(s)}\,ds \quad \text{and}\quad w(z,x)= e^{-\mathrm{i}p(x)z}\,v(z,x) \quad \forall\, x\in [0,1].
	\end{equation}
	For the rest of this proof we will fix $z\in \C$. We also hide the dependence of $w(z,\cdot)$ on $z$ simply writing $w$ in its place. Rewriting \eqref{eq_system} in terms of the above functions, we see that
	\begin{equation}\label{w_ODE}
		 \p^2_x w + 2\mathrm{i}zp'(x)\p_x w + \mathrm{i}zp''(x) w=0 \quad \forall\, x\in [0,1],
		\end{equation}
	subject to $w(z,0)=0$ and $\p_x w(z,0)=1$. Let us now define for each fixed $z\in \C$,
	\begin{equation}\label{def_E}
		E_z(x) = \frac{1}{2} \left( |w(z,x)|^2+ |\p_x w(z,x)|^2 \right) \quad \forall\, x\in [0,1]. 
		\end{equation}
	Note that
	\begin{equation}\label{E_zero}
			E_z(0)=\frac{1}{2}\quad \forall\, z\in \C.
		\end{equation}
	Differentiating $E_z(x)$ with respect to $x$ and using \eqref{w_ODE} we deduce that
	$$
	E_z'(x)= \mathrm{Re}\, \left( w \overline{w'} + \overline{w'} w'' \right)= \mathrm{Re}\,\left( w\overline{w'} -2\mathrm{i}zp' w' \overline{w'} -\mathrm{i}zp'' \overline{w'}{w}  \right).
	$$
	It follows that there exists a constant $C_0>0$ independent of $z\in \C$ such that
	$$
	|E'_z(x)| \leq C_0\left(1+|z|\right) E_z(x) \quad \forall\, x\in [0,1] \quad \forall\, z\in \C.
	$$
	Applying Gronwall's inequality together with \eqref{E_zero}, it follows that there exists constants $C_1,\lambda_1>0$ independent of $z\in \C$, such that
	\begin{equation}\label{E_est_1}
	C_1^{-1}\, e^{-\lambda_1|z| x}	\leq  E_z(x) \leq C_1 e^{\lambda_1 |z| x} \quad \forall\, x\in [0,1] \quad \forall\, z\in \C.
		\end{equation} 
	Using Cauchy-Schwarz inequality, it follows from the latter inequality together with definition of $E_z(x)$ that there exists a constant $C_2,\lambda_2>0$ such that
		\begin{equation}\label{E_est_2}
		C_2^{-1}\, e^{-\lambda_2|z| x}	\leq |w(z,x)| + |\p_x w(z,x)| \leq C_2 e^{\lambda_2 |z| x} \quad \forall\, x\in [0,1] \quad \forall\, z\in \C.
	\end{equation} 
Next, recalling the definition \eqref{p_w_def} we write for each $z\in \C$ and each $x\in [0,1]$,
\begin{equation}\label{v_w_rel}
 |v(z,x)| + |\p_x v(z,x)|= \left| e^{\mathrm{i}\,zp(x)}\right| \cdot \left( |\p_x w + \mathrm{i}zp'(x)w| + |w|\right).
\end{equation}
Note that there exists $C_3>0$ independent of $z\in \C$ such that
$$
C_3^{-1} (1+|z|)^{-1} (|w|+ |\p_x w|)	\leq  |\p_x w + \mathrm{i}zp(x)w| + |w| \leq C_3 (1+|z|) (|w|+ |\p_x w|),
$$
for all $x\in [0,1]$ and all $z\in \C$. The desired bound \eqref{ODE_energy} follows immediately from combining the latter bound with \eqref{E_est_2} and \eqref{v_w_rel}  and setting $x=1$.
\end{proof}

\begin{lemma}
	\label{lem_reduction_wave}
	Assume that the hypotheses of Theorem~\ref{thm_1} is satisfied. For each $j=1,2$ and any $z \in \C$, define $v_{j}(z,\cdot)$ as in \eqref{eq_system} with $c=c_j$. There exists a nowhere vanishing function $\rho:\R\to \R$ such that
	$$ v_{2}(\xi,x)=\rho(\xi)\,v_{1}(\xi,x) \quad x\geq 1 \quad  \xi \in \R.$$
\end{lemma}

\begin{proof}
	We begin by recording that in view of Lemma~\ref{lem_energy} (with $c=c_j$, $j=1,2,$), there holds
	\begin{equation}
		\label{ODE_1}
		C^{-1} e^{-\lambda |z|} \leq |v_{j}(z,1)| + |\p_x v_{j}(z,1)| \leq C e^{\lambda |z|}, \quad z \in \C,
	\end{equation}
	for $j=1,2,$ and some $C>0$ and $\lambda>0$ independent of $z$.  
	
	Next, recalling Remark~\ref{rmk_u}, let us define for each $j=1,2$ and each 
	$$(z,x)\in \C\times [0,1]\quad \text{with} \quad |\mathrm{Im}\,z|<\kappa,$$ 
	the function
	$$
	\hat{u}_j(z,x) = \int_{\R} u_j(t,x)\,e^{-\i z t} \,dt.
	$$
	Owing to \eqref{exp_decay}, we note that given any fixed $x\in [0,1]$, the function $\hat{u}_j(z,x)$ is analytic in the strip $|\textrm{Im}(z)|<\kappa$. Moreover, for any $x\in [0,1]$ and $z\in \C$ with $|\textrm{Im}(z)|<\kappa$, we have 
	\begin{equation}\label{u_j_1}
		-c_j^2(x)\p^2_x \hat{u}_j(z,x)= z^2\,\hat{u}_j(z,x),
	\end{equation}
	and that
	\begin{equation}
		\label{u_j_2}
		\hat{u}_j(z,0)=0 \quad \text{and} \quad \p_x \hat{u}_j(z,0) = a_j(z):=\int_\R \p_x u_j(t,0)\,e^{-\i z t}\,dt.
	\end{equation}
	Note that the function $a_j(z)=\int_\R \p_x u(t,0)\,e^{-\i z t}\,dt$ is well defined and analytic in the strip $|\textrm{Im}(z)|<\kappa$, thanks to \eqref{exp_decay}.  We claim that for each $j=1,2,$ the set 
	\begin{equation}\label{J_j}
	J_j=\{z\in \C\,:\, |\textrm{Im}(z)|\leq \frac{\kappa}{2}, \quad \text{and}\quad a_j(z)=0\},		
	\end{equation}
	is a collection of isolated points. Indeed, if this was not the case,  then there would be an accumulation point within the strip $|\textrm{Im}(z)|\leq \frac{\kappa}{2}$ and thus by analyticity, it would follow that $a_j(\xi)=0$ for all $\xi \in \R$ and subsequently by taking inverse Fourier transform that $\p_x u_j(t,0)=0$ for all $t\in \R$. Together with the fact that  $u_j(t,0)=0$ for all $t\in \R$, it would imply through energy estimates for one dimensional wave equations that $u_j(t,x)\equiv 0$ which is a contradiction to the fact that $f_j$ is not identical to zero. 

Let us now return to \eqref{u_j_1}-\eqref{u_j_2} and note as a consequence that given any $x\in [0,1]$ there holds
	\begin{equation}
		\label{v_hat_u}
		\hat{u}_j(z,x) = a_j(z) \, v_{j}(z,x), \qquad |\textrm{Im}(z)|\leq \frac{\kappa}{2}.
	\end{equation}
	Recalling Lemma~\ref{lem_cauchy}, we deduce that
	$$
	\hat{u}_1(z,1)=\hat{u}_2(z,1) \quad  \text{and}\quad \p_x\hat{u}_1(z,1)=\p_x\hat{u}_2(z,1), \quad |\textrm{Im}(z)|\leq \frac{\kappa}{2},
	$$
	which can be rewritten as
	\begin{equation}\label{key_iden_u}
		a_1(z)\,\left(v_{1}(z,1),\p_x v_{1}(z,1)\right)=	a_2(z)\,\left(v_{2}(z,1),\p_x v_{2}(z,1)\right),
		\end{equation}
	for all  $z\in \C$ with $|\textrm{Im}(z)|\leq \frac{\kappa}{2}$. Let us define the set 
	$$\mathbb A= \{z\in \C\,:\, |\textrm{Im}(z)| \leq \frac{\kappa}{2}\}\setminus (J_1\cup J_2).$$ 
As a consequence of \eqref{key_iden_u}, we obtain that 
$$
|a_1(z)|\cdot \left(|v_{1}(z,1)|+|\p_x v_{1}(z,1)|\right)=	|a_2(z)|\cdot \left(|v_{2}(z,1)|+|\p_x v_{2}(z,1)|\right).
$$
Recall from \eqref{ODE_1} that both terms in the parenthesis are nonzero, which allows us to obtain the following equation
$$
	\left|\frac{a_1(z)}{a_2(z)}\right| = \frac{|v_2(z,1)|+|\p_x v_2(z,1)|}{|v_1(z,1)|+|\p_x v_1(z,1)|} \qquad z\in \mathbb A.
$$
Using \eqref{ODE_1} again, we deduce that
\begin{equation}\label{key_iden_2}	\frac{C_0}{C_1}\,e^{-2\lambda |z|}\leq \left|\frac{a_1(z)}{a_2(z)}\right| \leq \frac{C_1}{C_0}\, e^{2\lambda |z|},\end{equation}
for all $z\in \mathbb A$. As the function $(\frac{a_1(z)}{a_2(z)})$ is holomorphic in a punctured neighborhood of any $z\in J_1\cup J_2$, and as it satisfies the bounds given in \eqref{key_iden_2}, it follows from Riemann's theorem for removable singularities that the function $(\frac{a_1(z)}{a_2(z)})$ admits a holomorphic extension, denoted by $\rho(z)$ here, to the entire strip $|\textrm{Im}(z)|\leq \frac{\kappa}{2}$, and that this extended holomorphic function is nowhere vanishing in this strip. This implies that 
$$ \left(v_{2}(z,1),\p_x v_{2}(z,1)\right)=	\rho(z)\,\left(v_{1}(z,1),\p_x v_{1}(z,1)\right), \quad |\textrm{Im}(z)|\leq \frac{\kappa}{2}.$$
The claim now follows from restricting the above equality to the positive real-axis in the complex plane.
\end{proof}

Before proving the theorem, we first require a lemma concerning the asymptotic expression of the Dirichlet eigenvalues for the operator \( -c^{-2}(x) \frac{d^2}{dx^2} \) on the interval \( (0,1) \). While this result is likely well known, we were only able to locate such expansions in the context of one-dimensional Schr\"odinger operators. Thus, for the reader’s convenience, we include a proof here. Let us also recall that the operator \( -c^2(x)\frac{d^2}{dx^2} \) with $c\in C^2([0,1])$ has a discrete Dirichlet spectrum, see e.g. \cite[Chapter 1]{Zettl2021} for an introduction into the topic. 

\begin{lemma}
	\label{lem_sturm}
	Let $c\in C^2([0,1])$ be a positive function. Let us denote by $\mu_1<\mu_2<\ldots$ the Dirichlet eigenvalues for the differential operator $-c^{2}(x)\p^2_x$ on the interval $(0,1)$. There holds,
	\begin{equation}\label{eigen_asymp_c}
		\mu_k = \left(\int_0^1 \frac{1}{c(s)}\,ds\right)^{-2}\, k^2 \pi^2 + O(1), \quad \text{as $k\to \infty$}.
		\end{equation}
	\end{lemma}

\begin{proof}
 To verify the spectral asymptotics \eqref{eigen_asymp_c}, we will first perform a standard Liouville type transformation that will change the ODE \eqref{eq_system} to a Schr\"odinger equation. We recall the Liouville transformation law as follows,
 \begin{equation}\label{liouville_eq}
-\p_x \left( \sigma\,\p_x (\sigma^{-\frac{1}{2}}u)\right) = \sigma^{\frac{1}{2}}\, \left(-\p^2_x u + \frac{\p^2_x{\sigma^{\frac{1}{2}}}}{\sigma^{\frac{1}{2}}} u \right),
 \end{equation}
for all positive functions $\sigma \in C^2([0,1])$ and all $u\in C^2([0,1])$. Motivated by this transformation, let us define a new variable $t$ by 
	$$ t(x) = \frac{1}{L}\int_0^{x}\frac{1}{c(s)}\,ds,$$
	where 
	$$ L:= t(1)=\int_0^{1} \frac{1}{c(s)}\,ds>0,$$
	and we have normalized the variable $t$ so  that $t(0)=0$, $t(1)=1$. Note also that $t$ is a strictly increasing function of $x$ for all $x\in \R_+$. Defining
	$$ \tilde{v}(\xi,t)= L^{-1}\,c(0)^{-\frac{1}{2}}\,c^{-\frac{1}{2}}(x(t))\,v(\xi,x(t)) \quad \text{and} \quad \tilde{c}(t):=c(x(t)),$$
	we can rewrite the ODE given by \eqref{eq_system} (thanks to \eqref{liouville_eq}) as follows
	$$ -\p^2_{t} \tilde{v}(\xi,t) + q(t) \tilde{v}(\xi,t) = L^2\,\xi^2\,  \tilde{v}(\xi,t) \quad \text{on $\R_+$},$$
	subject to $\tilde{v}(\xi,0)=0$ and $\p_{t}\tilde{v}(\xi,0)=1$, where 
	\begin{equation}\label{q_def_pf}
		q = \tilde{c}^{\frac{1}{2}}\,\p^2_{t} \tilde{c}^{-\frac{1}{2}}.\end{equation}
	Let us denote by $\lambda_1<\lambda_2<\ldots$, the Dirichlet eigenvalues for the operator $-\p^2_t + q(t)$ on the interval $(0,1)$. It is clear from the above reduction that there holds:
	$$\mu_n = L^{-2}\,\lambda_n \quad \forall\, n\in \N.$$
	The theorem now follows from the well known asymptotic expressions for Dirichlet eigenvalues of 1-D Schr\"odinger operators on $(0,1)$ with a bounded potential, see for example  \cite[Theorem 4, page 35]{Pschel1986InverseST}.
\end{proof}

We are ready to prove the theorem.

\begin{proof}[Proof of Theorem~\ref{thm_1}]
	Recalling Lemma~\ref{lem_reduction_wave}, we deduce that 
	\begin{equation}\label{eigen_equality} \left (v_1(\xi,1)=0 \iff v_2(\xi,1)=0 \right) \quad \forall\, \xi \in \R.\end{equation}
	For $j=1,2,$ denote by $\{\mu_k^{(j)}\}_{k=1}^\infty$ the Dirichlet eigenvalues of $-c_j^2(x)\p^2_x$ on the interval $(0,1)$ written in strictly increasing order and observe from \eqref{eigen_equality} that there holds:
	$$  \mu_k^{(1)}= \mu_k^{(2)} \quad k\in \N.$$
	Applying Lemma~\ref{lem_sturm} we deduce that
	\begin{equation}\label{weyl_asymp}
				L:=\int_0^1 \frac{1}{c_1(x)}\,dx = \int_0^1 \frac{1}{c_2(x)}\,dx. 
		\end{equation}
	Let us now return to the equation 
	$$ -c_j^2(x)\,\p^2_xv_j(\xi,x) = \xi^2 v_j(\xi,x) \quad x\in \R_+ \quad \xi \in \R,$$ 
	subject to $v_j(\xi,0)=0$ and $\p_xv_j(\xi,0)=1$. We will perform a similar Liouville transformation as in Lemma~\ref{lem_sturm} that will change the ODE above to a Schr\"odinger equation. Fixing $j\in \{1,2\}$, let us define a new variable $y_j$ by 
	$$ y_j(x) = \int_0^{x}\frac{1}{c_j(s)}\,ds.$$
	Recalling that $c_0=c_1(0)=c_2(0)$, we define
	$$ \tilde{v}_j(\xi,y_j) := c_0^{-\frac{1}{2}}\,c_j^{-\frac{1}{2}}(x(y_j))\,v_j(\xi,x(y_j)) \quad \text{and} \quad \tilde{c}_j(y_j):=c_j(x(y_j)),$$
	we can rewrite the above ODE as follows
	$$ -\p^2_{y_j} \tilde{v}_j(\xi,y_j) + q_j(y_j) \tilde{v}_j(\xi,y_j) = \xi^2 \tilde{v}_j(\xi,y_j) \quad \text{on $\R_+$},$$
	subject to $\tilde{v}_j(\xi,0)=0$ and $\p_{y_j}\tilde{v}_j(\xi,0)=1$, where 
	$$ q_j = \tilde{c}_j^{\frac{1}{2}}\,\p^2_{y_j} \tilde{c}_j^{-\frac{1}{2}}.$$
	With the reduction to Schr\"odinger equation now achieved, we note crucially that 
	\begin{equation}\label{y_eq}  y_1(1) = 	\int_0^1 \frac{1}{c_1(x)}\,dx = \int_0^1 \frac{1}{c_2(x)}\,dx=y_2(1).\end{equation}
	Recalling Lemma~\ref{lem_reduction_wave} together with the above equality, we note that
	$$  \tilde{v}_2(\xi,y) =\rho(\xi) \,\tilde{v}_1(\xi,y) \quad \xi \in \R, \quad  y\geq L:=y_1(1)=y_2(1).$$
	As $\rho$ is nowhere vanishing, we note that
		\begin{equation}\label{eigen_equality_2} \left (\tilde{v}_1(\xi,2L)=0 \iff \tilde{v}_2(\xi,2L)=0 \right) \quad  \xi \in \R.\end{equation}
		The above equality implies that the 1D Schr\"odinger operators 
		$$ -\p^2_{y} +q_1(y)  \quad \text{and}\quad  -\p^2_{y} +q_2(y), $$
	have the same Dirichlet eigenvalues on the interval $[0,2L]$. As $\textrm{supp}(q_1-q_2) \subset (0,L)$, it follows from \cite{Hochstadt1978ANIS} that $q_1(y)=q_2(y)$ for all $y\in [0,L]$. Therefore,
	$$   \tilde{c}_1^{\frac{1}{2}}\,\p^2_{y} \tilde{c}_1^{-\frac{1}{2}}= \tilde{c}_2^{\frac{1}{2}}\,\p^2_{y} \tilde{c}_2^{-\frac{1}{2}} \quad y\in [0,L].$$
	This reduces to 
	$$
	-\frac{1}{2}\p_y^2 \log(\tilde{c}_1) +\frac{1}{4} (\p_y \log(\tilde{c}_1))^2 =-\frac{1}{2}\p_y^2 \log(\tilde{c}_2) +\frac{1}{4} (\p_y \log(\tilde{c}_2))^2,    \quad    y\in [0,L],
	$$
	and subsequently to 
	$$
  \p^2_y \log(\frac{\tilde{c}_1}{\tilde{c}_2}) -A(y)\p_y\log(\frac{\tilde{c}_1}{\tilde{c}_2})=0   \quad   y\in [0,L],
	$$
	where $A(y)=\frac{1}{2}\p_y\log(\tilde{c}_1(y)\,\tilde{c}_2(y))$. As $\tilde{c}_1(0)=\tilde{c}_2(0)=c_0$ and $\tilde{c}_1(L)=\tilde{c}_2(L)=c_0$ (thanks to \eqref{y_eq}), we deduce via unique existence of solutions to ODEs that 
	$$ \tilde{c}_1(y)=\tilde{c}_2(y) \quad  y \in [0,L].$$
	Next, for $j=1,2,$ defining $x_j:[0,L]\to [0,1]$ via 
	$$ x_1(y) := \int_0^{y}\tilde{c}_1(s)\,ds=\int_0^{y}\tilde{c}_2(s)\,ds=:x_2(y),$$
	and recalling from the definition of $\tilde{c}_j$ that $c_j(x_j(y))=\tilde{c}_j(y)$ for all $y\in [0,L]$, we deduce from above that
	$$c_1(x)=c_2(x) \quad  x\in [0,1].$$
	Finally, the equality of the initial data follows from unique continuation principle for wave equations.
\end{proof}


\section{Proof of Theorem~\ref{thm_2}}
We will assume throughout this section that the assumptions of Theorem~\ref{thm_2} are fulfilled. For $j=1,2$ and $x\in [0,1]$, let us define 
\begin{equation}\label{vV}
 v_j(x)= e^{-\frac{1}{2}\int_1^xb_j(s)\,ds} u_j(x) \quad \text{and}\quad V_j(x)= -\frac{1}{2}b_j'(x)+\frac{1}{4}(b_j(x))^2.\end{equation}
We emphasize here that $V_j$ is real-valued and that $V_j\in C^0([0,1])$. It is straightforward to see that 
\begin{equation}\label{heat_3}
	\begin{aligned}
		\begin{cases}
			\p_t v_j-\p^2_xv_j+V_j(x)\,v_j=0\,\quad &\text{on $(0,T)\times (0,1)$},\\
			v_j(t,0)=0 \,\quad &\text{on $(0,T)$,}\\
			v_j(t,1)=0 \,\quad &\text{on $(0,T)$,}\\
			v_j(0,x)=h_j(x):=e^{-\frac{1}{2}\int_1^xb_j(s)\,ds} g_j(x) \,\quad &\text{on $(0,1)$,}
		\end{cases}
	\end{aligned}
\end{equation}
We note that in view of the statement of Theorem~\ref{thm_2} there holds
\begin{equation}\label{new_eq_v}
		\p_xv_1(t_k,1)=\p_x v_2(t_k,1) \quad  k\in \N.
	\end{equation}
Let us denote by $\lambda^{(j)}_1<\lambda^{(j)}_2<\ldots$ the Dirichlet eigenvalues associated to $-\p^2_x +V_j$ and by $\{\phi^{(j)}_k\}_{k=1}^{\infty}\subset C^{2}([0,1])$ an orthonormal $L^2((0,1))$-basis consisting of eigenfunctions of $-\p^2_x + V_j$ on $(0,1)$, that is to say, 
$$ -\p^2_x\phi^{(j)}_k(x) +V_j(x) \phi^{(j)}_k(x) = \lambda^{(j)}_k\, \phi^{(j)}_k(x) \quad x\in I=(0,1),$$
subject to 
$$ \phi_k^{(j)}(0)=\phi_k^{(j)}(1)=0 \quad \text{and}\quad \|\phi_k^{(j)}\|_{L^2((0,1))}=1\quad k\in \N.$$
\begin{lemma}
	\label{lem_spectral_heat}
	Given any $k\in \N$, if $\int_{0}^1 h_1(x)\,\phi_k^{(1)}(x)\,dx \neq 0$, then 
	$$ \lambda_k^{(1)} \in \{\lambda_l^{(2)}\}_{l=1}^{\infty}.$$ 
\end{lemma}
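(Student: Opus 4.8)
The plan is to expand the solution $v_j$ of \eqref{heat_3} in the eigenbasis $\{\phi_k^{(j)}\}$ and thereby reduce the statement to a uniqueness property for generalized Dirichlet series. Writing $c_k^{(j)}:=\int_0^1 h_j(x)\,\phi_k^{(j)}(x)\,dx$ for the Fourier coefficients of the initial data $h_j$, the unique solution admits the expansion
$$ v_j(t,x) = \sum_{k=1}^\infty e^{-\lambda_k^{(j)} t}\, c_k^{(j)}\, \phi_k^{(j)}(x), \qquad (t,x)\in \R_+\times (0,1).$$
Since the Dirichlet eigenvalues obey the Weyl asymptotics $\lambda_k^{(j)}\sim \pi^2 k^2$ while $|c_k^{(j)}|$ and $|\p_x\phi_k^{(j)}(1)|$ grow at most polynomially in $k$, the series obtained by differentiating in $x$ and setting $x=1$ converges absolutely and locally uniformly on $\{\mathrm{Re}(t)>0\}$, so that
$$ F_j(t) := \p_x v_j(t,1) = \sum_{k=1}^\infty e^{-\lambda_k^{(j)} t}\, c_k^{(j)}\, \p_x\phi_k^{(j)}(1)$$
defines a function holomorphic in the right half-plane $\{\mathrm{Re}(t)>0\}$.

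Next I would upgrade the discrete data \eqref{new_eq_v} into an identity valid for all $t>0$. The measurement gives $F_1(t_k)=F_2(t_k)$ for every $k\in\N$, and by hypothesis the points $t_k$ accumulate at $T_1\in(0,T)$, an interior point of the common domain of holomorphy. The identity theorem for holomorphic functions then yields $F_1\equiv F_2$ on $\{\mathrm{Re}(t)>0\}$, and in particular
$$ \sum_{k=1}^\infty e^{-\lambda_k^{(1)} t}\, c_k^{(1)}\, \p_x\phi_k^{(1)}(1) = \sum_{l=1}^\infty e^{-\lambda_l^{(2)} t}\, c_l^{(2)}\, \p_x\phi_l^{(2)}(1), \qquad t>0.$$

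Finally I would match exponents. First note that $\p_x\phi_k^{(j)}(1)\neq 0$ for every $k$: since $\phi_k^{(j)}(1)=0$ by the boundary condition, a vanishing derivative at $x=1$ would force $\phi_k^{(j)}\equiv 0$ by uniqueness for the second order ODE $-(\phi_k^{(j)})''+V_j\phi_k^{(j)}=\lambda_k^{(j)}\phi_k^{(j)}$, contradicting $\|\phi_k^{(j)}\|_{L^2((0,1))}=1$. Now merge the two strictly increasing eigenvalue sequences into a single strictly increasing sequence $\nu_1<\nu_2<\cdots$ and rewrite the last display as $\sum_m d_m e^{-\nu_m t}=0$ for all $t>0$, where $d_m$ is the difference of the (possibly absent) coefficients attached to $\nu_m$ from the two sides. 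Multiplying by $e^{\nu_1 t}$ and letting $t\to\infty$ forces $d_1=0$, and proceeding inductively shows $d_m=0$ for all $m$ — this is precisely the uniqueness of generalized Dirichlet series, equivalently the linear independence of the exponentials $\{e^{-\nu t}\}$ with distinct $\nu$ on $(0,\infty)$. Thus if $c_k^{(1)}\neq 0$, the coefficient $c_k^{(1)}\,\p_x\phi_k^{(1)}(1)$ attached to $\lambda_k^{(1)}$ on the left is nonzero, so $\lambda_k^{(1)}$ cannot be absent from the right-hand spectrum; that is, $\lambda_k^{(1)}=\lambda_l^{(2)}$ for some $l$, which is the claim.

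The routine analytic points — convergence and termwise $x$-differentiation of the eigenfunction expansion, and the polynomial bounds on $c_k^{(j)}$ and $\p_x\phi_k^{(j)}(1)$ — present no real difficulty. The conceptual crux is the two-step reduction: first converting the countable, accumulating time samples into a full Dirichlet-series identity by analyticity, and then extracting spectral information from that identity through linear independence of exponentials together with the non-degeneracy $\p_x\phi_k^{(j)}(1)\neq 0$.
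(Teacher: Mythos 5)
Your proof is correct, and it coincides with the paper's own argument up to the halfway point: the paper likewise expands the boundary flux in the spectral series \eqref{w_j_exp} and uses real-analyticity in $t$ together with the accumulation of $\{t_k\}$ at $T_1$ to upgrade the sampled equality \eqref{new_eq_v} to an identity for all $t\in\R_+$. Where you diverge is the extraction step. The paper takes a Laplace transform, producing the identity \eqref{spectral_exp} between the functions $H_j(z)=\sum_k a_k^{(j)}/(z+\lambda_k^{(j)})$; it then needs the uniform bound $|a_k^{(j)}|\leq C_0$ (obtained from the asymptotics \eqref{eigen_pw}--\eqref{eigen_asymp} and an integration by parts using $h_j\in H^1_0$) to justify that $H_j$ is meromorphic on all of $\C$ with poles exactly at $\{-\lambda_k^{(j)}\,:\,a_k^{(j)}\neq 0\}$, continues the identity analytically, and reads off the conclusion by matching simple poles at $z=-\lambda_k^{(1)}$. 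You instead stay with the exponential series and invoke uniqueness of generalized Dirichlet series, proved by the standard device of multiplying by $e^{\nu_1 t}$ and letting $t\to\infty$, then inducting; this is more elementary — no transform, no analytic continuation beyond the half-plane, and only absolute convergence of the series for each fixed $t>0$ is needed, which the Weyl asymptotics and crude polynomial bounds on $c_k^{(j)}$ and $\p_x\phi_k^{(j)}(1)$ already supply. The two mechanisms are of course cousins: pole matching for $\sum_k a_k/(z+\lambda_k)$ is the transform-side avatar of linear independence of $\{e^{-\lambda_k t}\}$ on $(0,\infty)$. One point where your write-up is more careful than the paper's: the lemma's hypothesis concerns $\int_0^1 h_1\phi_k^{(1)}\,dx$, whereas both arguments actually need $a_k^{(1)}=\bigl(\int_0^1 h_1\phi_k^{(1)}\,dx\bigr)\,\p_x\phi_k^{(1)}(1)\neq 0$; you supply the missing (standard) observation that $\p_x\phi_k^{(1)}(1)\neq 0$, via uniqueness for the second-order ODE with Cauchy data $\phi_k^{(1)}(1)=\p_x\phi_k^{(1)}(1)=0$, which the paper leaves implicit.
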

\begin{proof}
Using the above spectral decomposition, we may write for each $j=1,2$ and $t\in (0,T)$,
\begin{equation}
	\label{u_exp}
	\p_x v_j(t,1)= \sum_{k=1}^{\infty} \left(\int_0^1 h_j(x)\phi_k^{(j)}(x)\,dx\right)\, e^{-\lambda^{(j)}_kt}\, \p_x\phi^{(j)}_k(1),
\end{equation}
Let us define for each $j=1,2$ and $t>0$, the function
\begin{equation}\label{w_j_exp}
w_j(t)= \sum_{k=1}^{\infty} a_k^{(j)} e^{-\lambda^{(j)}_kt} \quad t\in \R_+,
\end{equation}
where
\begin{equation}
	\label{def_a_j}
	a_k^{(j)}=\left(\int_0^1 h_j(x)\phi_k^{(j)}(x)\,dx\right)\, \p_x\phi^{(j)}_k(1) \quad \forall\, k\in \N.
\end{equation}
It is straightforward to see that $w_j$ depends real-analytically on $t\in \R_+$. Recalling \eqref{new_eq_v} we obtain that
\begin{equation}\label{new_eq_v_boundary}
w_1(t)=w_2(t) \quad t\in \R_+.
\end{equation}
Let us study the behaviour of the coefficients $a_k^{(j)}$ in a bit more detail. First, we recall from \cite[Theorem 4, page 35]{Pschel1986InverseST} the asymptotic pointwise eigenfunction estimates for 1D Schr\"odinger operators given by 
\begin{equation}\label{eigen_pw}
	\begin{aligned}
		\phi_k^{(j)}(x) &= \sqrt{2}\sin(k\pi x) + O(\frac{1}{k}),\\
		\p_x \phi_k^{(j)}(x)&= \sqrt{2}k\pi \cos(k\pi x) + O(1),\\
	\end{aligned}
\end{equation}
as $k\to \infty$ where the convergence is uniform in $x\in [0,1]$, as well as the asymptotic eigenvalue expression
\begin{equation}\label{eigen_asymp}
	\lambda^{(j)}_k = (k\pi)^2 + \int_0^1 V_j(x)\,dx + o(1) \quad \text{as $k  \to \infty$}.
\end{equation}
By combining the above asymptotic expressions with integration by parts and using the fact that $h_j(0)=h_j(1)=0$, we may write for each $k>k_0$, with $k_0>0$ sufficiently large, that
\begin{multline}\label{h_bound}
	\left| \int_0^1 h_j(x)\phi_k^{(j)}(x)\,dx\right|= \frac{1}{\lambda_k^{(j)}}\left| \int_0^1 h_j(x)(-\p^2_x+V_j)\phi_k^{(j)}(x)\,dx\right|\\
	= \frac{1}{\lambda_k^{(j)}}\left| \int_0^1 \p_xh_j(x)\,\p_x\phi_k^{(j)}+ \int_0^1 h_j(x)V_j(x)\,\phi_k^{(j)}(x)\,dx)\right|\\
	\leq2\frac{1+\|V_j\|_{L^\infty((0,1))}}{\lambda_k^{(j)}}\|h_j\|_{H^1((0,1))}\, \|\phi_k^{(j)}\|_{H^1((0,1))} \leq \frac{C}{k},
\end{multline}
for some constant $C>0$ that is independent of $k$. Note that we are using the bounds \eqref{eigen_pw} and \eqref{eigen_asymp} in the last step above. Recalling the definition of $a_k^{(j)}$ together with \eqref{h_bound} as well as \eqref{eigen_pw} for $\p_x \phi_k^{(j)}(1)$, we obtain that
\begin{equation}\label{a_k_bound} |a_k^{(j)}| \leq C_0 \qquad \forall\, k\in \N\end{equation}
for some positive constant $C_0$ independent of $k$. Let us now return to the definition of $w_j(t)$, $t>0$ with $j=1,2$. In view of the above uniform bound for $a_k^{(j)}$ together with the eigenvalue bounds $\frac{1}{\lambda_k} =O(\frac{1}{k^2})$ as $k\to \infty$, we see that given each 
$$z\in \C \quad \text{with} \quad \textrm{Re}(z)>\max\{-\lambda_1^{(1)},-\lambda_2^{(1)}\},$$  
the function 
\begin{equation}\label{w_abs} w_j(t)\, e^{-zt}= \sum_{k=1}^\infty a_k^{(j)} e^{(-\lambda^{(j)}_k-z)t},\end{equation}
converges absolutely in $L^1(\R_+)$. Next, we define the Laplace transform
$$ (\mathcal Lw_j)(z) = \int_0^\infty w_j(t)\, e^{-zt}\,dt \quad \forall\, z\in \C\quad \text{with $\textrm{Re}(z)>\max\{-\lambda_1^{(1)},-\lambda_2^{(1)}\}$},$$
Using the expression \eqref{w_abs}, we obtain via the Dominated Convergence Theorem that
$$
	(\mathcal Lw_j)(z)=\sum_{k=1}^{\infty} \frac{a_k^{(j)}}{z+ \lambda_k^{(j)}} \quad \forall\, z\in \C \quad \text{with} \quad \textrm{Re}(z)>\max\{-\lambda_1^{(1)},-\lambda_2^{(1)}\}.$$  
Recalling the equality \eqref{new_eq_v_boundary} it follows that
\begin{equation}
	\label{spectral_exp}
	\sum_{k=1}^{\infty} \frac{a_k^{(1)}}{z+ \lambda_k^{(1)}}=	\sum_{k=1}^{\infty} \frac{a_k^{(2)}}{z+ \lambda_k^{(2)}} \quad  \textrm{Re}(z)>\max\{-\lambda_1^{(1)},-\lambda_2^{(1)}\}.
\end{equation}
Using the uniform bound \eqref{a_k_bound} again together with the eigenvalue asymptotics \eqref{eigen_asymp}, we deduce that the function
$$
H_j(z)= \sum_{k=1}^{\infty} \frac{a_k^{(j)}}{z+ \lambda_k^{(j)}},
$$
is holomorphic on $\C$ away from the collection of isolated points $\{-\lambda_k^{(j)}\}_{k=1}^{\infty}$. Combining \eqref{spectral_exp} with analytic continuation, we deduce that
\begin{equation}\label{H_eq}
H_1(z) = H_2(z) \quad z\in \C \setminus \{-\lambda_1^{(1)},-\lambda_{1}^{(2)},\ldots\}.
\end{equation}
The above equality has a simple but crucial consequence. If, $a_k^{(1)}\neq 0$ for some $k\in \N$, then $H_1(z)$ has a simple pole at $z= -\lambda^{(1)}_k$. By the equality \eqref{H_eq}, it follows that $H_2$ must also have a pole at $z=-\lambda^{(1)}_k$, which in turn implies that $\lambda_k^{(1)}$ must also be a Dirichlet eigenvalue for $-\p^2_x + V_2$ on $(0,1)$.
\end{proof}

Let us now recall a deep result of Levinson \cite[Chapter II, Theorem VIII, page 13]{Levinson1940} regarding the distribution of zeros of entire functions under certain growth estimates (see also \cite[Chapter 4, page 173]{Levin64}). Before writing the statement of the theorem of Levinson, we define the function $\log^+:\R \to [0,\infty)$ via
$$
\log^+x = \begin{cases}
	\log x \quad \text{if $x>1$}\\
	0		\quad \text{otherwise}.
\end{cases}
$$
\begin{theorem}[Chapter II, Theorem VIII, \cite{Levinson1940}]
	\label{thm_inter}
	Let $F(z)$ be an entire function such that it is not identical to zero. Assume that
	\begin{equation}\label{logarith_growth}
		\int_\R 	\frac{\log^+|F(x)|}{1+x^2}\,dx<\infty,
		\end{equation}
	and that
		\begin{equation}\label{lim_sup_cond}
		\limsup_{r\to \infty}\frac{\log |F(re^{\mathrm{i}\theta})|}{r} \leq k.
	\end{equation}
Let $n(r)$ be the number of zeros of the function $F(z)$ that lie in the region 
$$\{z\in \C\,:\, \mathrm{Re}\,z\geq 0\quad |z|< r \}.$$ 
Then, there exists a number $B\leq \frac{k}{\pi}$ such that, 
$$
\lim_{r\to \infty} \frac{n(r)}{r}=B \leq \frac{k}{\pi}.
$$
\end{theorem}
For our purposes, the existence of the limit above is not needed and we only are concerned with the consequence that $\limsup_{r\to\infty}  \frac{n(r)}{r}\leq \frac{k}{\pi}.$ We have the following immediate corollary of the above Theorem that will be used later in the proof of Theorem~\ref{thm_2}.
\begin{lemma}
	\label{lem_inter}
	Let $I=[-a,a]$ for some $a>0$ and let $f\in L^2(I)$ be not identical to zero. Denote by $\Lambda= \{\mu_k\}_{k=1}^\infty\subset (0,\infty)$ the set of those zeros of the entire function
	$$ F(z):=\int_I f(x)\,e^{-\mathrm{i}zx}\,dx,$$
	that lie on the positive real axis of the complex plane. There holds,
	$$ \limsup_{r\to \infty}\frac{\left|\Lambda \cap [0,r)\right|}{r} \leq \frac{a}{\pi}.$$
\end{lemma}
\begin{proof}
	Throughout this proof, we write $\log(z)$ for the logarithm function with its principal branch, namely $\arg(z) \in (-\pi,\pi]$. It is classical that the Fourier transform of a $L^2(\R)$ function with compact support is an entire function. This shows that the function $F$ defined in the lemma is indeed an entire function. Let us now verify the hypotheses of Theorem~\ref{thm_inter} for the entire function $F(z)$ defined in the lemma.  First, note that \eqref{logarith_growth} follows from the fact that
	$$
	\int_\R |F(x)|^2\,dx = \int_I |f(x)|^2\,dx,
	$$
	together with Cauchy-Schwarz inequality. Indeed,
	$$
	\left(\int_\R \frac{\log^+|F(x)|}{1+x^2}\,dx\right)^2 \leq 	\left(\int_\R \frac{|F(x)|}{1+x^2}\,dx\right)^2 \leq  \left(\int_\R |F(x)|^2dx\right)\,\left(\int_\R \frac{1}{(1+x^2)^2}\,dx\right)  <\infty.
	$$
	We will now show that \eqref{lim_sup_cond} is satisfied with $k=a$. To this end, first we note that for $\theta=0,\pm \pi$ in \eqref{lim_sup_cond}, there holds
	\begin{equation}\label{theta_0}
		\limsup_{r\to \infty}\frac{\log |F(re^{\mathrm{i}\theta})|}{r}=0 \quad \theta=0,\pm \pi.
	\end{equation}
	Next, for each $\theta \in (-\pi,\pi)\setminus \{0\}$ and each $r>0$, we write
	$$
	|F(re^{\mathrm{i}\theta})|=\left| \int_I f(x)\,e^{-\mathrm{i}\,re^{\mathrm{i}\theta}x}\,dx  \right| \leq \|f\|_{L^2(I)} \,\left( \int_{-a}^a e^{2rx\sin\theta}\,dx \right)^{\frac{1}{2}}\leq  \|f\|_{L^2(I)}\, \frac{e^{ra|\sin\theta|}}{{\sqrt{r}\sqrt{|\sin\theta|}}}.
	$$
	Using the fact that $\log z = \log|z| + \mathrm{i} \arg(z)$, it follows from the latter estimate that
	\begin{equation}\label{theta_not_zero}\limsup_{r\to \infty}\frac{\log |F(re^{\mathrm{i}\theta})|}{r} \leq a|\sin\theta|\leq a \quad \forall\,\theta \in (-\pi,\pi)\setminus \{0\}.\end{equation}
	We have verified that the hypotheses of Theorem~\ref{thm_inter} is satisfied with $k=a$. The result follows immediately.
\end{proof}

We have the following key lemma which crucially relies on the above mentioned theorem and another Payley-Wiener type result due to Remling \cite{Remling2002SchrdingerOA,Remling2003InverseST}. The idea here is that, due to the fact that the initial data is assumed to be supported in a small interval $[0,\epsilon]$, and thanks to the latter two results in spectral theory, we can show that only information on a certain {\em fraction} (in the sense of upper density) of the spectrum will be lost.

\begin{lemma}
	\label{lem_density_heat}
	For each $\lambda \in \R$, let us define
	$$ d(\lambda)= \left|  \{k\in \N\,:\, 0<\lambda_k^{(1)}<\lambda \quad \text{and}\quad \int_0^1 h_1(x)\,\phi_k^{(1)}(x)\,dx=0\} \right|.$$
	There holds,
	$$
	\limsup_{\lambda\to \infty} \frac{d(\lambda)}{\sqrt\lambda} \leq \frac{\varepsilon}{\pi}.
	$$
\end{lemma}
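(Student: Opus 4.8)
The plan is to re-encode the vanishing conditions $\int_0^1 h_1\phi_k^{(1)}\,dx=0$ as zeros of a single entire function of exponential type $\epsilon$, and then feed the square roots of the affected eigenvalues into Theorem~\ref{thm_inter}. For $\mu\in\C$ let $\psi(x,\mu)$ denote the regular solution of $-\psi''+V_1\psi=\mu^2\psi$ normalized by $\psi(0,\mu)=0$, $\psi'(0,\mu)=1$. For each $k$, the value $\lambda_k^{(1)}$ being a Dirichlet eigenvalue means precisely that $\psi(\cdot,\sqrt{\lambda_k^{(1)}})$ also satisfies the right endpoint condition, so by one-dimensionality of the eigenspaces it is a nonzero multiple of $\phi_k^{(1)}$. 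Hence, setting
$$ G(\mu):=\int_0^1 h_1(x)\,\psi(x,\mu)\,dx, $$
we obtain the equivalence $\int_0^1 h_1\phi_k^{(1)}\,dx=0\iff G(\sqrt{\lambda_k^{(1)}})=0$ for every $k$ with $\lambda_k^{(1)}>0$.

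The heart of the argument is to show that $G$ is entire of exponential type at most $\epsilon$ and square-integrable on $\R$. The $L^2(\R)$ decay follows from $\supp h_1\subset[0,\epsilon]$ together with $h_1\in H^1_0$, which after an integration by parts gives $G(\mu)=O(|\mu|^{-2})$ as $\mu\to\infty$ along $\R$. The exponential-type bound is exactly a Paley--Wiener type statement for the Schr\"odinger transform: since $h_1$ is supported in $[0,\epsilon]$ and the transformation-operator representation $\psi(x,\mu)=\frac{\sin(\mu x)}{\mu}+\int_0^x K(x,t)\frac{\sin(\mu t)}{\mu}\,dt$ has kernel supported in $0\le t\le x\le\epsilon$, the integrand defining $G$ is controlled by $e^{\epsilon|\mathrm{Im}\,\mu|}$. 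This is the content of Remling's result \cite{Remling2002SchrdingerOA}, and I expect rigorously tracking the support and the $L^2$ behaviour through this generalized transform to be the main technical obstacle. Granting it, the classical Paley--Wiener theorem produces $\Theta\in L^2(\R)$ with $\supp\Theta\subset[-\epsilon,\epsilon]$ and $G=\mathcal F\Theta$. Moreover $\Theta\not\equiv 0$: otherwise $G\equiv 0$ on $\R$, forcing $\int_0^1 h_1\phi_k^{(1)}\,dx=0$ for all $k$ and hence $h_1\equiv 0$ by completeness of $\{\phi_k^{(1)}\}$, contradicting that $g_1$ (and thus $h_1$) is not identically zero.

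Finally I would apply Theorem~\ref{thm_inter} with $f=\Theta$, $I=[-\epsilon,\epsilon]$, and
$$ \Lambda=\bigl\{\sqrt{\lambda_k^{(1)}}\,:\,\lambda_k^{(1)}>0\ \text{and}\ \textstyle\int_0^1 h_1\phi_k^{(1)}\,dx=0\bigr\}. $$
The asymptotics \eqref{eigen_asymp} give $\sqrt{\lambda_k^{(1)}}=k\pi+o(1)$, so the full square-root spectrum, and a fortiori the subset $\Lambda$, is uniformly discrete; and by the previous paragraph $(\mathcal F\Theta)(\mu)=0$ at every point of $\Lambda$. Since here $b-a=2\epsilon$, the theorem yields
$$ \limsup_{r\to\infty}\frac{|\Lambda\cap[0,r]|}{r}\le\frac{2\epsilon}{2\pi}=\frac{\epsilon}{\pi}. $$
By construction $|\Lambda\cap[0,\sqrt\lambda]|=d(\lambda)$ (both sides count the indices $k$ with $0<\lambda_k^{(1)}\le\lambda$ and vanishing spectral coefficient), so substituting $r=\sqrt\lambda$ gives $\limsup_{\lambda\to\infty}d(\lambda)/\sqrt\lambda\le\epsilon/\pi$, as required.
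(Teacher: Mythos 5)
Your proposal is correct and takes essentially the same route as the paper: both arguments convert the vanishing conditions $\int_0^1 h_1\phi_k^{(1)}\,dx=0$ into the vanishing of the Fourier transform of a not identically zero $L^2$ function supported in $[-\epsilon,\epsilon]$ at the points $\sqrt{\lambda_k^{(1)}}$, relying on Remling's Paley--Wiener type theorem for the key support statement, and then apply Theorem~\ref{thm_inter} together with the eigenvalue asymptotics \eqref{eigen_asymp}. The only cosmetic difference is packaging: you form the entire function $G(\mu)=\int_0^1 h_1\psi(x,\mu)\,dx$ and invoke the classical Paley--Wiener theorem to produce $\Theta$, whereas the paper invokes Remling's statement that the transform range $\mathcal S_N$ is independent of $V_1$ to produce $\tilde f\in L^2((0,\epsilon))$ and then extends it oddly --- the two constructions yield the same object.
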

\begin{proof}
Consider the 1D Schr\"{o}dinger equations
$$ -\p^2_x\psi(z,x) + V_1(x) \psi(z,x) = z^2 \psi(z,x) \quad \text{on $(0,1)$},$$
subject to 
$$ \psi(z,0)=0 \quad \text{and} \quad \p_x \psi(z,0)=1.$$
Let us define for each $N\in (0,1)$,
$$ \mathcal S_N:= \left\{ \int_0^N f(x)\,\psi(z,x)\,dx\,:\, f\in L^2((0,N))\right\}.$$
Here, the parameter $N\in (0,\infty)$ is fixed and the set $\mathcal S_N$ is concerned with behaviour of truncated integrals of $f$ against eigenfunctions of the Schr\"odinger operator on $(0,1)$ from $x=0$ to $x=N$. In view of \cite[Theorem 2.2]{Remling2003InverseST} (cf. \cite[Theorem 4.1]{Remling2002SchrdingerOA}), for each fixed $N>0$, the set $\mathcal S_N$ is independent of $V_1$, that is to say,
$$ \mathcal S_N=\left\{\int_{0}^N f(x)\,\frac{\sin(z x)}{z}\,dx\,:\, f\in L^2((0,N))\right\}.$$
Thus, given a not identically vanishing $h_1$ as above (recall that $\textrm{supp}\, h_1 \subset [0,\varepsilon]$), there exists a not identically vanishing function $\tilde{f}\in L^2((0,\varepsilon))$ such that 
$$
\int_{0}^{\varepsilon} \tilde{f}(x)\,\frac{\sin(z x)}{z}\,dx = \int_0^{\varepsilon} h_1(x)\,\psi(z,x)\,dx, \quad \forall\, z\in \C.
$$
In particular, this means that for each $\lambda_k^{(1)}>0$ (we recall that $\lambda_1^{(1)}<\lambda_2^{(1)}<\ldots$ denotes the Dirichlet eigenvalues of $-\p_x^2 +V_1(x)$ on $(0,1)$), we have that
$$ \int_0^1 h_1(x) \, \phi_k^{(1)}(x)\,dx=0$$
if and only if 
$$  \int_0^\varepsilon \tilde{f}(x) \,\sin(\sqrt{\lambda_k^{(1)}} x)\,dx=0.$$ 
Let us define a new function $f\in L^{2}(\R)$ as follows
$$
f(x)= \begin{cases}
	\tilde{f}(x) \quad &\text{if $x\in (0,\varepsilon)$,}\\
	-\tilde{f}(-x) \quad &\text{if $x\in (-\varepsilon,0)$}\\
	0 \quad &\text{otherwise}.
\end{cases}
$$
In terms of the not identically vanishing function $f\in L^2(\R)$ with $\textrm{supp}\,f\subset [-\varepsilon,\varepsilon]$, we note that for any $\lambda_k^{(1)}> 0$, we have the equivalence 
\begin{equation}\label{h_condition} \int_0^1 h_1(x) \, \phi_k^{(1)}(x)\,dx=0\end{equation}
if and only if 
\begin{equation}\label{f_condition} (\mathcal Ff)(\sqrt{\lambda_k^{(1)}})=\int_{-\varepsilon}^{\varepsilon} \,f(x) e^{-\i \sqrt{\lambda^{(1)}_k} x}\,dx=0.\end{equation}
We can apply Lemma~\ref{lem_inter} (with $I=[-\varepsilon,\varepsilon]$) to the equation \eqref{f_condition} to obtain that 
$$
\limsup_{\lambda\to \infty} \frac{d(\lambda)}{\sqrt{\lambda}} \leq \frac{\varepsilon}{\pi}.
$$
\end{proof}
We are ready to prove the theorem. The idea is that by the above two lemmas, a certain fraction of spectrum of $-\p^2_x +V_1$ and $-\p^2_x+V_2$ are equal to each other and the aim is to reconstruct the potentials from such a limited knowledge. This is a variant of the classical inverse spectral problem for Schr\"odinger operators with a very rich literature, see e.g. \cite{Ambarzumian1929berEF,Borg1946EineUD,Hald1984DiscontinuousIE,Hochstadt1978ANIS,Levitan1964DeterminationOA,Simon1999ANA} for the case of the knowledge of the full spectral data and \cite{Hatinouglu2019MixedDI,Gesztesy1999InverseSA,Horvath2001OnTI,Horvath2005InverseSP,Marletta_2005} for results with only partial knowledge of the spectral data.

\begin{proof}[Proof of Theorem~\ref{thm_2}]
	By the assumption of the theorem, we have that 
	$$b_1(x)=b_2(x) \qquad x\in [\frac{1}{2}-\frac{\varepsilon_1}{2},1]$$ 
	for some $\varepsilon_1\in (\varepsilon,1)$. Recalling \eqref{vV}, we obtain 
	\begin{equation}
		\label{V_12}
		V_1(x)=V_2(x) \quad  x\in [\frac{1}{2}-\frac{\varepsilon_1}{2},1].
	\end{equation}
	Let us now define for each $\lambda\in \R$, the functions
	$$ N(\lambda)= \left|  \{k\in \N\,:\, 0<\lambda_k\leq \lambda\} \right|,$$
	and 
	$$ S(\lambda)= \left|  \{k\in \N\,:\, 0<\lambda_k^{(1)}\leq \lambda \quad \text{and}\quad \lambda_k^{(1)}=\lambda_l^{(2)} \quad \text{for some $l\in \N$}\} \right|.$$
It is clear from \eqref{eigen_asymp} that there holds
\begin{equation}\label{eigen_distribution}
	N(\lambda) = \frac{\sqrt{\lambda}}{\pi} + O(1).
\end{equation}
Let $\varepsilon_2 \in (\varepsilon,\varepsilon_1)$. By applying Lemma~\ref{lem_spectral_heat} and Lemma~\ref{lem_density_heat} together with \eqref{eigen_distribution}, we deduce that there exists $\lambda_0>0$ such that
$$ S(\lambda)= N(\lambda)-d(\lambda) \geq (1-\varepsilon_2) N(\lambda) \geq (1-\varepsilon_1)N(\lambda) + \frac{\varepsilon_1}{2} \qquad  \lambda>\lambda_0$$
where we recall that $\varepsilon_1 \in (\varepsilon,1)$. Thanks to \eqref{V_12} and the above inequality, the assumptions of \cite[Theorem A.3]{Gesztesy1999InverseSA} are satisfied and therefore, it follows from that theorem of Gesztesy and Simon that $V_1=V_2$ on $(0,1)$. Recalling \eqref{vV}, it follows that the function $b(x)=b_1(x)-b_2(x)$ satisfies
$$ b'(x) - (\frac{b_1(x)+b_2(x)}{2})\, b(x)=0 \quad  x\in (0,1).$$
As $b(1)=0$, it follows that $b(x)=0$ for all $x\in (0,1)$. The equality of the initial data now follows from unique continuation principle for heat equations.
\end{proof}

\subsection*{Conflict of interest}
The author has no conflicts of interest to declare that are relevant to this article.

\subsection*{Data availability statement}
Data sharing not applicable to this article as no datasets were generated or analysed during the current study.

	\bibliography{refs} 
	
	\bibliographystyle{alpha}

\end{document}